\theoremstyle{theorem}
\newtheorem{theorem}{Theorem}
\newtheorem{corollary}[theorem]{Corollary}
\newtheorem{proposition}[theorem]{Proposition}
\theoremstyle{definition}
\title[Pressure of box-like IFS]{An explicit formula for the pressure of box-like affine iterated function systems}
\begin{document}
\author{Ian D. Morris}
\begin{abstract}
In this article we investigate the pressure function and affinity dimension for iterated function systems associated to the ``box-like'' self-affine fractals investigated by D.-J. Feng, Y. Wang and J.M. Fraser. Combining previous results of V. Yu. Protasov, A. K\"aenm\"aki and the author we obtain an explicit formula for the pressure function which makes it straightforward to compute the affinity dimension of box-like self-affine sets. We also prove a variant of this formula which allows the computation of a modified singular value pressure function defined by J.M. Fraser. We give some explicit examples where the Hausdorff and packing dimensions of a box-like self-affine fractal may be easily computed.
 \end{abstract}
\maketitle
\section{Introduction and statement of results}

 An \emph{iterated function system} or \emph{IFS} is defined to be any finite collection $T_1,\ldots,T_N$ of contractions of some fixed complete, nonempty metric space. It is a classical result of J. E. Hutchinson \cite{Hu81} that for every iterated function system $T_1,\ldots,T_N$ there exists a unique compact nonempty set $X$ which satisfies $X=\bigcup_{i=1}^N T_iX$. We call $X$ the \emph{attractor} of the IFS $T_1,\ldots,T_N$. In this article we shall be interested in the case where each $T_i$ is an affine contraction of $\mathbb{R}^d$, in which case the attractor $X$ is called a \emph{self-affine set}.
 
 The dimension theory of self-affine sets has been the subject of ongoing research investigation since the 1980s (see e.g. \cite{Be84,Fa88,Mc84}) and has enjoyed an intense burst of activity in recent years (we note for example \cite{Ba07,BaKaKo17,BaRa17,DaSi16,FaKe17,FeSh14,JoPoSi07,KaMo16,Mo16,Ra17}). In many cases the value of the Hausdorff, packing or box dimension of a self-affine set is determined by the values of a \emph{pressure functional}, which is itself defined in terms of limits of sequences of matrix products arising from the corresponding affine transformations $T_i$. The calculation of the dimension values defined by those formulas is in general a nontrivial problem: for example, it was shown only in 2014 that the \emph{affinity dimension}, a dimension formula first defined by K. Falconer in the 1988 article \cite{Fa88}, depends continuously on the affine transformations $T_i$ which are used to define it (see \cite{FeSh14}). It was also shown only in 2016 that the affinity dimension can in principle be computed to any prescribed accuracy in finitely many computational steps (see \cite{Mo16}). The purpose of this article is to show that in the special case of the ``box-like'' affine IFS studied by D.-J. Feng, Y. Wang and J.M. Fraser in \cite{FeWa05,Fr12,Fr16} the affinity dimension admits a simple description which renders it essentially trivial to compute. 

For each $d \geq 1$ let $M_d(\mathbb{R})$ denote the vector space of all $d \times d$ real matrices. We recall that the \emph{singular values} of a matrix $A \in M_d(\mathbb{R})$, denoted $\alpha_1(A),\ldots,\alpha_d(A)$, are defined to be the non-negative square roots of the eigenvalues of the positive semidefinite matrix $A^TA$, listed in decreasing order with repetition in the case of multiple eigenvalues. We note that $\alpha_1(A)=\|A\|$ and $\prod_{i=1}^d \alpha_i(A)=|\det A|$ for all $A \in M_d(\mathbb{R})$, and $\alpha_d(A)=\|A^{-1}\|^{-1}$ whenever $A \in M_d(\mathbb{R})$ is invertible. We will say that $A\in M_d(\mathbb{R})$ is a \emph{generalised permutation matrix} if it has exactly one nonzero entry in every row and in every column, and denote the group of all $d \times d$ generalised permutation matrices by $P_d(\mathbb{R})$. If $A \in P_d(\mathbb{R})$ then it is easy to see that the singular values of $A$ are simply the absolute values of the $d$ nonzero entries of $A$, listed in decreasing order. Following \cite{Fa88}, for each real number $s \geq 0$ we define the \emph{singular value function} $\varphi^s \colon M_d(\mathbb{R}) \to [0,+\infty)$ by
\[\varphi^s(A):=\left\{\begin{array}{cl}\alpha_1(A)\alpha_2(A)\cdots \alpha_{\lfloor s\rfloor}(A) \alpha_{\lceil s \rceil}(A)^{s-\lfloor s \rfloor}&\text{if }0\leq s\leq d,\\ |\det A|^{\frac{s}{d}}&\text{if }s \geq d.\end{array}\right.\]
For each fixed value of $s \geq 0$ the singular value function is upper semi-continuous on $M_d(\mathbb{R})$, is continuous when restricted to the set of invertible matrices, and satisfies $\varphi^s(AB)\leq \varphi^s(A)\varphi^s(B)$ for all $A,B \in M_d(\mathbb{R})$. If $\mathsf{A}=(A_1,\ldots,A_N) \in M_d(\mathbb{R})^N$ and $s>0$ are given, we define the \emph{pressure} of $(A_1,\ldots,A_N)$ to be the limit
\[P(\mathsf{A},s):= \lim_{n \to \infty} \frac{1}{n}\log \sum_{i_1,\ldots,i_n=1}^N \varphi^s\left(A_{i_n}\cdots A_{i_1}\right) \in [-\infty,+\infty).\]
The pressure $P(\mathsf{A},s)$ exists by subadditivity, and is a real number whenever $A_1,\ldots,A_N$ are all invertible by virtue of the elementary inequality $\varphi^s(B)\geq \alpha_d(B)^s$ which holds for every $B \in M_d(\mathbb{R})$. We refer the reader to the article \cite{Fa88} for proofs of these statements.

If $T_1,\ldots,T_N \colon \mathbb{R}^d \to \mathbb{R}^d$ are affine contractions then we have $T_ix=A_ix+v_i$ for all $x \in \mathbb{R}^d$ where $A_1,\ldots,A_N \in M_d(\mathbb{R})$ and $v_1,\ldots,v_N \in \mathbb{R}^d$. Since the transformations $T_i$ are contractions we have $\|A_i\|<1$ for every $i$. The function $s \mapsto P(\mathsf{A},s)$ associated to $\mathsf{A}:=(A_1,\ldots,A_N)$ is hence easily seen to be strictly decreasing, and it is in fact continuous. The unique value of $s\geq 0$ such that $P(\mathsf{A},s)=0$ is called the \emph{affinity dimension} of $(T_1,\ldots,T_N)$ and has been investigated for its role in the dimension theory of self-affine fractals since its introduction by Falconer in 1988; see \cite{FaMi07,FaSl09,FeSh14,Fr15,Mo16,PoVy15} as well as the original article \cite{Fa88}. 

Following \cite{FeWa05}, an affine IFS $T_1,\ldots,T_N$ acting on $\mathbb{R}^d$ will be called \emph{box-like} if we may write each $T_i$ in the form $T_ix=A_ix+v_i$ where $A_i \in P_d(\mathbb{R})$ for $i=1,\ldots,N$. Let $e_1,\ldots,e_d$ denote the standard basis of $\mathbb{R}^d$; we observe that $A \in M_d(\mathbb{R})$ belongs to $P_d(\mathbb{R})$ if and only if there exist $a_1,\ldots,a_d \in \mathbb{R} \setminus \{0\}$ and $\pi \colon \{1,\ldots,d\}\to \{1,\ldots,d\}$ such that $Ae_i = a_i e_{\pi(i)}$ for every $i=1,\ldots,d$. Let us also write $\rho(A)$ for the spectral radius of a matrix $A \in M_d(\mathbb{R})$.

The contribution of this article is the following formula for the pressure of a box-like iterated function system:
\begin{theorem}\label{th:main}
Let $\mathsf{A}=(A_1,\ldots,A_N) \in P_d(\mathbb{R})^N$, let $0<s<d$ and define $k:=\lfloor s \rfloor$. For $i=1,\ldots,N$ let $\pi_i \colon \{1,\ldots,d\} \to \{1,\ldots,d\}$ be the unique permutation and $a^{(i)}_1,\ldots,a^{(i)}_d$ the unique real numbers such that $A_i e_j= a^{(i)}_j e_{\pi_i(j)}$ for every $i=1,\ldots,N$ and $j=1,\ldots,d$.

Let $\mathfrak{S}$ denote the set of all pairs $(S,\ell )$ where $S \subset \{1,\ldots,d\}$ has $k$ elements and where $\ell \in \{1,\ldots,d\}\setminus S$, and observe that $\mathfrak{S}$ has exactly $(d-k){d \choose k}$ elements. Let us relabel the basis for $\mathbb{R}^{(d-k){d \choose k}}$ as $\{e_{S,\ell} \colon (S,\ell )\in\mathfrak{S}\}$. For each $i=1,\ldots,N$ let us define a matrix $\hat{A}_i \in P_{(d-k){d \choose k}}(\mathbb{R})$ by
\[\hat{A}_i e_{S,\ell}= \left(\prod_{j \in S} \left|a^{(i)}_j\right|\right)\left|a^{(i)}_\ell \right|^{s-k} e_{\pi_i(S),\pi_i(\ell)}.\]
Then
\[P(\mathsf{A},s)=\log\rho\left(\sum_{i=1}^N \hat{A}_i\right).\]
\end{theorem}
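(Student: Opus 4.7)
The plan is to exploit the fact that $P_d(\mathbb{R})$ is closed under multiplication: every product $B = A_{i_n}\cdots A_{i_1}$ is itself a generalised permutation matrix, whose singular values are just the absolute values of its $d$ nonzero entries arranged in decreasing order. This should reduce the computation of $\varphi^s(B)$ to a purely combinatorial expression in those entries, and should in turn allow the sum defining the pressure to be rewritten as the sum of entries of a power of a non-negative matrix, to which Gelfand's formula applies.

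The first step is an elementary extremal identity: if $B \in P_d(\mathbb{R})$ has nonzero entries $b_1,\dots,b_d$, then
\[\varphi^s(B) = \max_{(S,\ell)\in\mathfrak{S}}\left(\prod_{j\in S}|b_j|\right)|b_\ell|^{s-k}.\]
One verifies this by noting that the expression only depends on the $k+1$ indices $\{S,\ell\}$, whose product weight equals $\prod_{j\in S\cup\{\ell\}}|b_j|\cdot|b_\ell|^{s-k-1}$; since $s-k-1 < 0$ the maximum is attained by taking these $k+1$ indices to correspond to the $k+1$ largest singular values and taking $\ell$ to label the smallest of them. The second step is a direct induction, on the word length $n$, showing that
\[\hat{A}_{i_n}\cdots\hat{A}_{i_1}\, e_{S,\ell} = \left(\prod_{j\in S}|b_j|\right)|b_\ell|^{s-k}\, e_{\sigma(S),\sigma(\ell)},\]
where $b_j$ is the nonzero entry of $B := A_{i_n}\cdots A_{i_1}$ in column $j$ and $\sigma = \pi_{i_n}\cdots\pi_{i_1}$; this is a tautological consequence of distributing the defining formula for $\hat{A}_i$ across a composition, provided one keeps careful track of how the permutations act on the subset $S$ at each stage.

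Combining these two steps, every nonzero entry of $\hat{A}_{i_n}\cdots\hat{A}_{i_1}$ is bounded above by $\varphi^s(B)$, and at least one equals $\varphi^s(B)$, so if $\|\cdot\|_1$ denotes the sum of absolute values of entries one obtains
\[\varphi^s(A_{i_n}\cdots A_{i_1}) \;\leq\; \|\hat{A}_{i_n}\cdots\hat{A}_{i_1}\|_1 \;\leq\; (d-k)\binom{d}{k}\,\varphi^s(A_{i_n}\cdots A_{i_1}).\]
Because the matrices $\hat{A}_i$ have non-negative entries there is no cancellation when summing products, so
\[\sum_{i_1,\ldots,i_n=1}^N \|\hat{A}_{i_n}\cdots\hat{A}_{i_1}\|_1 = \left\|\left(\sum_{i=1}^N \hat{A}_i\right)^{\!n}\right\|_1.\]
Taking $\tfrac{1}{n}\log$ and letting $n\to\infty$, the sandwich above kills the constant $(d-k)\binom{d}{k}$ and Gelfand's spectral radius formula converts the right hand side into $\log\rho(\sum_{i=1}^N\hat{A}_i)$, giving the theorem.

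The content is essentially distributed between two modest calculations: the extremal identity for $\varphi^s$ on $P_d(\mathbb{R})$, and the tracking of indices in Step 2. Neither is conceptually hard, but the latter is the main potential source of error since one must correctly compose the permutations $\pi_{i_m}$ in the right order and reindex the products $\prod_{j\in\sigma_{m-1}(S)}|a^{(i_m)}_j|$ as $\prod_{j\in S}|a^{(i_m)}_{\sigma_{m-1}(j)}|$ so that the $n$-fold product telescopes into $\prod_{j\in S}|b_j|$. Once the bookkeeping is secure, the proof is essentially a two-sided estimate plus Gelfand, with no appeal to joint spectral radius theory needed for this restricted class of matrices.
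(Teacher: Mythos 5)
Your proposal is correct and follows essentially the same route as the paper: your extremal identity for $\varphi^s$ on $P_d(\mathbb{R})$ and the multiplicativity of $A\mapsto\hat{A}$ are exactly the two halves of the paper's Proposition \ref{pr:two}, and your passage from $\sum_{i_1,\ldots,i_n}\|\hat{A}_{i_n}\cdots\hat{A}_{i_1}\|_1=\|(\sum_i\hat{A}_i)^n\|_1$ to the spectral radius via Gelfand is precisely the paper's Proposition \ref{pr:one} (Protasov's formula), with your explicit constant $(d-k)\binom{d}{k}$ playing the role of the norm-equivalence constant used there. The only cosmetic difference is that you work directly with the entry-sum norm rather than passing through the operator norm identity $\|\mathfrak{p}_s(A)\|=\varphi^s(A)$.
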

Theorem \ref{th:main} arises from the combination of two prior results: first, a formula for a certain pressure-like function for positive matrices given by V. Yu. Protasov in \cite{Pr10}; and second, a simplified expression for the function $\varphi^s(A)$ which is valid for all $A \in P_d(\mathbb{R})$ and which was given by A. K\"aenm\"aki and the author in \cite{KaMo16}.

Whilst Theorem \ref{th:main} can be used to quickly compute the affinity dimension of a box-like IFS of any dimension, in the two-dimensional case it admits a particularly straightforward articulation:
\begin{corollary}\label{co:revco}
Let $(T_1,\ldots,T_N)$ be an affine iterated function system acting on $\mathbb{R}^2$, and let  $v_1,\ldots,v_N \in \mathbb{R}^2$ and $A_1,\ldots,A_N \in M_2(\mathbb{R})$ such that $T_ix=A_ix+v_i$ for every $x \in \mathbb{R}^2$. Suppose that for some integer $k \in \{0,\ldots,N\}$ we have
\[A_i =\left\{\begin{array}{cl} \begin{pmatrix}a_i&0\\0&d_i\end{pmatrix} &\text{when }1 \leq i \leq k\\
\begin{pmatrix}0&b_i\\c_i&0\end{pmatrix} &\text{when }k+1 \leq i \leq N\end{array}\right.
\]
where each $a_i,b_i,c_i,d_i$ is nonzero. Then the affinity dimension of $(A_1,\ldots,A_N)$ is the unique real number $s>0$ such that one of the following holds: either the matrix
\[\begin{pmatrix}\sum_{i=1}^k |a_i|^s & \sum_{i=k+1}^N |b_i|^s \\ \sum_{i=k+1}^N |c_i|^s & \sum_{i=1}^k |d_i|^s\end{pmatrix} \]
has spectral radius $1$, and $0<s \leq 1$; or the matrix
\[\begin{pmatrix}\sum_{i=1}^k |a_i|.|d_i|^{s-1} & \sum_{i=k+1}^N |b_i|.|c_i|^{s-1} \\ \sum_{i=k+1}^N |b_i|^{s-1}|c_i| & \sum_{i=1}^k |a_i|^{s-1}|d_i|\end{pmatrix}\]
has spectral radius $1$ and $1 \leq s \leq 2$; or $\sum_{i=1}^N |\det A_i|^{s/2}=1$ and $s \geq 2$.
\end{corollary}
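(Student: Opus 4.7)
The plan is to deduce each of the three cases by specialising Theorem \ref{th:main} (when $s<2$) or invoking the $s\geq d$ clause of the singular value function (when $s\geq 2$), and then to appeal to the strict monotonicity and continuity of $s\mapsto P(\mathsf{A},s)$ noted in the introduction to obtain uniqueness of the affinity dimension.

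The range $s\geq 2$ is handled directly from the definition: here $\varphi^s(A)=|\det A|^{s/2}$ and $|\det|$ is multiplicative, so
\[\sum_{i_1,\ldots,i_n=1}^N\varphi^s(A_{i_n}\cdots A_{i_1})=\Big(\sum_{i=1}^N|\det A_i|^{s/2}\Big)^{\!n},\]
whence $P(\mathsf{A},s)=\log\sum_{i=1}^N|\det A_i|^{s/2}$, which vanishes precisely when the third condition of the corollary holds.

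For $0<s<2$ I would apply Theorem \ref{th:main} with $d=2$ and $k=\lfloor s\rfloor$. When $k=0$ the index set $\mathfrak{S}$ is $\{(\emptyset,1),(\emptyset,2)\}$ and the recipe reads $\hat A_i e_{\emptyset,\ell}=|a^{(i)}_\ell|^{s} e_{\emptyset,\pi_i(\ell)}$; the diagonal $A_i$ contribute diagonal matrices and the anti-diagonal $A_i$ contribute anti-diagonal ones, so reading off $\sum_i\hat A_i$ in the basis $(e_{\emptyset,1},e_{\emptyset,2})$ yields the first matrix of the corollary. When $k=1$ the index set is $\{(\{1\},2),(\{2\},1)\}$ and $\hat A_i e_{\{j\},\ell}=|a^{(i)}_j||a^{(i)}_\ell|^{s-1}e_{\{\pi_i(j)\},\pi_i(\ell)}$; the same dichotomy produces the second matrix. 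The two formulas agree at $s=1$, and at $s=2$ the $k=1$ matrix has diagonal entries $\sum_{i\leq k}|a_id_i|$ and off-diagonals $\sum_{i>k}|b_ic_i|$ and hence spectral radius $\sum_i|\det A_i|$, in agreement with the $s\geq 2$ formula, so the three pieces patch together continuously.

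No deep step is involved; the argument is essentially book-keeping, and the one place requiring care is the $k=1$ case, where the permutation $\pi_i$ attached to an anti-diagonal $A_i$ swaps the basis vectors $e_{\{1\},2}$ and $e_{\{2\},1}$, and it is this swap that produces the asymmetry between the exponents attached to $b_i$ and $c_i$ in the $(1,2)$ and $(2,1)$ entries of the corollary's displayed matrix. Uniqueness of $s$ across all three cases is then immediate from the strict monotonicity of the pressure.
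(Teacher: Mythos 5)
Your proposal is correct and follows essentially the same route as the paper: the $s\geq 2$ range via multiplicativity of the determinant, the ranges $(0,1)$ and $[1,2)$ via Theorem \ref{th:main} with the bases $(e_{\emptyset,1},e_{\emptyset,2})$ and $(e_{\{1\},2},e_{\{2\},1})$ respectively, the observation that the three expressions agree at $s=1$ and $s=2$, and uniqueness from strict monotonicity of the pressure. Your identification of the basis swap induced by $\pi_i$ for the anti-diagonal matrices, which produces the exponent asymmetry between the $(1,2)$ and $(2,1)$ entries, matches the paper's computation exactly.
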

The proof of Theorem \ref{th:main} and Corollary \ref{co:revco} are given in the following section. In \S3 we show how these ideas can be adapted to a modified pressure functional considered by J.M. Fraser in \cite{Fr12}, and at the end of the paper we present some examples. 

\section{Proof of Theorem \ref{th:main} and Corollary \ref{co:revco}}

The following result is a special case of a theorem of V. Yu. Protasov \cite[Theorem 1]{Pr10}; some related results may be found in \cite{Pr97,Zh98}. Since the proof is both short and simple we include it.
\begin{proposition}\label{pr:one}
Let $A_1,\ldots,A_N\in M_d(\mathbb{R})$ be non-negative matrices. Then
\[\lim_{n \to \infty} \left(\sum_{i_1,\ldots,i_n=1}^N \left\|A_{i_n}\cdots A_{i_1}\right\|\right)^{\frac{1}{n}}=\rho\left(\sum_{i=1}^N A_i\right).\]
\end{proposition}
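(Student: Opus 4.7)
The plan is to show that the sum on the left-hand side can be rewritten as the norm of $S^n$, where $S=\sum_{i=1}^N A_i$, and then apply Gelfand's spectral radius formula. The key observation is that since each $A_i$ is non-negative, every product $A_{i_n}\cdots A_{i_1}$ is non-negative, so there is no cancellation when we expand $S^n$ as a sum over words.

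First I would fix the entrywise $\ell^1$ norm $\|B\|_{\mathrm{sum}}:=\sum_{j,k}|B_{jk}|$ on $M_d(\mathbb{R})$. For any finite collection of non-negative matrices $B_1,\ldots,B_m$ we have $\|\sum_\alpha B_\alpha\|_{\mathrm{sum}}=\sum_\alpha\|B_\alpha\|_{\mathrm{sum}}$, because all entries are non-negative and the absolute values can be dropped. Expanding the $n$-fold product gives $S^n=\sum_{i_1,\ldots,i_n=1}^N A_{i_n}\cdots A_{i_1}$, and each term of this sum is non-negative, so
\[\sum_{i_1,\ldots,i_n=1}^N \|A_{i_n}\cdots A_{i_1}\|_{\mathrm{sum}}=\|S^n\|_{\mathrm{sum}}.\]

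Next, since $\|\cdot\|_{\mathrm{sum}}$ is a submultiplicative norm on $M_d(\mathbb{R})$ (or simply by equivalence with the operator norm), Gelfand's formula yields $\|S^n\|_{\mathrm{sum}}^{1/n}\to\rho(S)$. Finally, because any two norms on $M_d(\mathbb{R})$ are equivalent, there exist constants $c_1,c_2>0$ with $c_1\|B\|\leq\|B\|_{\mathrm{sum}}\leq c_2\|B\|$ for every $B\in M_d(\mathbb{R})$, so
\[c_1\sum_{i_1,\ldots,i_n=1}^N\|A_{i_n}\cdots A_{i_1}\|\;\leq\;\|S^n\|_{\mathrm{sum}}\;\leq\;c_2\sum_{i_1,\ldots,i_n=1}^N\|A_{i_n}\cdots A_{i_1}\|.\]
Taking $n$-th roots and letting $n\to\infty$ annihilates the constants $c_1,c_2$ and identifies the limit with $\rho(\sum_{i=1}^N A_i)$.

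There is really no serious obstacle here; the only point worth handling carefully is verifying the additivity $\|\sum_\alpha B_\alpha\|_{\mathrm{sum}}=\sum_\alpha\|B_\alpha\|_{\mathrm{sum}}$ for non-negative $B_\alpha$, which is where the non-negativity hypothesis on the $A_i$ enters in an essential way. Without it, only the inequality $\|S^n\|\leq\sum_{\mathbf{i}}\|A_{i_n}\cdots A_{i_1}\|$ would hold, giving $\rho(S)$ as a lower bound for the limit but not an upper bound.
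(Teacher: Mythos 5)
Your proof is correct and follows essentially the same route as the paper's: both introduce the entrywise sum norm, use its additivity on non-negative matrices to collapse the sum over words into $\|S^n\|_{\mathrm{sum}}$, apply Gelfand's formula, and pass back to the operator norm by equivalence of norms on $M_d(\mathbb{R})$. Your write-up is slightly more explicit about the final norm-equivalence step (which the paper handles by a one-line swap inside the limit), but there is no substantive difference.
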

\begin{proof}
Let $|B|$ denote the sum of the absolute values of the entries of $B\in M_d(\mathbb{R})$ and note that if $B_1,B_2$ are non-negative matrices then $|B_1+B_2|=|B_1|+|B_2|$. Clearly $|\cdot|$ is a norm on $M_d(\mathbb{R})$ and in particular is equivalent to the Euclidean operator norm $\|\cdot\|$. Using Gelfand's formula and non-negativity we may calculate
\begin{align*}
\rho\left(\sum_{i=1}^N A_i\right) &=\lim_{n \to \infty} \left\|\left(\sum_{i=1}^N A_i\right)^n\right\|^{\frac{1}{n}}=\lim_{n \to \infty} \left|\left(\sum_{i=1}^N A_i\right)^n\right|^{\frac{1}{n}}\\
&=\lim_{n \to \infty} \left| \sum_{i_1,\ldots,i_n=1}^N A_{i_n}\cdots A_{i_1}\right|^{\frac{1}{n}}\\
&=\lim_{n \to \infty}  \left(\sum_{i_1,\ldots,i_n=1}^N \left|A_{i_n}\cdots A_{i_1}\right|\right)^{\frac{1}{n}}\\
&=\lim_{n \to \infty}  \left(\sum_{i_1,\ldots,i_n=1}^N \left\|A_{i_n}\cdots A_{i_1}\right\|\right)^{\frac{1}{n}}
\end{align*}
as required.
\end{proof}
The following result was introduced in \cite[\S5]{KaMo16}, where it was used to investigate the equilibrium states of the pressure functional for box-like affine iterated function systems. We again include the proof since it is barely longer than the statement.
\begin{proposition}\label{pr:two}
Let $d \geq 2$, let $0<s<d$ and define $k:=\lfloor s \rfloor$. Let $\mathfrak{S}$ denote the set of all pairs $(S,\ell )$ where $S \subset \{1,\ldots,d\}$ has $k$ elements and where $\ell \in \{1,\ldots,d\}\setminus S$, and observe that $\mathfrak{S}$ has exactly $(d-k){d \choose k}$ elements. Let us relabel the basis for $\mathbb{R}^{(d-k){d \choose k}}$ as $\{e_{S,\ell} \colon (S,\ell )\in\mathfrak{S}\}$.  

Define a function $\mathfrak{p}_s \colon P_d(\mathbb{R}) \to P_{(d-k){d \choose k}}(\mathbb{R})$ as follows. Given $A \in P_d(\mathbb{R})$, let $\pi \colon \{1,\ldots,d\} \to \{1,\ldots,d\}$ be the unique permutation and $a_1,\ldots,a_d$ the unique nonzero real numbers such that $Ae_i=a_ie_{\pi(i)}$ for every $i=1,\ldots,d$. Let $\mathfrak{p}_s(A) \in P_{(d-k){d \choose k}}(\mathbb{R})$ be the unique matrix such that
\[\mathfrak{p}_s(A) e_{S,\ell}= \left(\prod_{j \in S} \left|a_j\right|\right)\left|a_\ell \right|^{s-k} e_{\pi(S),\pi(\ell)}\]
for every $(S,\ell) \in \mathfrak{S}$. Then $\mathfrak{p}_s\colon P_d(\mathbb{R}) \to P_{(d-k){d \choose k}}(\mathbb{R})$ is a group homomorphism, and $\|\mathfrak{p}_s(A)\|=\varphi^s(A)$ for every $A \in P_d(\mathbb{R})$.
\end{proposition}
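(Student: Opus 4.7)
I would verify the two assertions of the proposition --- multiplicativity and the norm identity --- separately, each by a direct calculation on the distinguished basis $\{e_{S,\ell}\}$ of $\mathbb{R}^{(d-k)\binom{d}{k}}$.

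For the homomorphism property, take $A,B \in P_d(\mathbb{R})$ with $Ae_i = a_i e_{\pi(i)}$ and $Be_i = b_i e_{\sigma(i)}$. A one-line computation gives $ABe_j = b_j a_{\sigma(j)} e_{\pi(\sigma(j))}$, so the definition of $\mathfrak{p}_s$ lets me write $\mathfrak{p}_s(AB)e_{S,\ell}$ down directly; applying $\mathfrak{p}_s(B)$ and then $\mathfrak{p}_s(A)$ to $e_{S,\ell}$ yields the same answer after the reindexing identity $\prod_{j \in \sigma(S)} |a_j| = \prod_{j \in S} |a_{\sigma(j)}|$. Since $\mathfrak{p}_s$ clearly sends the identity matrix to the identity matrix, it is a group homomorphism.

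For the norm identity, I use that $\mathfrak{p}_s(A)$ is itself a generalised permutation matrix, so its operator norm is the largest absolute value among its nonzero entries:
\[
\|\mathfrak{p}_s(A)\| = \max_{(S,\ell) \in \mathfrak{S}} \left(\prod_{j \in S} |a_j|\right)|a_\ell|^{s-k}.
\]
Relabelling so that $|a_1| \geq \cdots \geq |a_d|$, the right-hand side evaluated at $(S,\ell) = (\{1,\dots,k\},k+1)$ is $|a_1|\cdots|a_k||a_{k+1}|^{s-k} = \varphi^s(A)$, so it remains to show no other pair beats this. I would argue in two exchange steps: (i) if any index in $S \cup \{\ell\}$ lies in $\{k+2,\dots,d\}$, replace it by a missing index from $\{1,\dots,k+1\}$; this does not decrease any factor of the product, so we may assume $S \cup \{\ell\} = \{1,\dots,k+1\}$; and (ii) among those remaining candidates, which have the form $(\{1,\dots,k+1\}\setminus\{m\},m)$ with $m \leq k$, the ratio to the candidate value computes to $(|a_{k+1}|/|a_m|)^{1-(s-k)} \leq 1$, since $|a_m| \geq |a_{k+1}|$ and $1-(s-k) \in (0,1]$.

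The only step with any content is the exchange inequality in (ii); everything else is unwinding definitions. There is no real obstacle provided one tracks the constraint $1 - (s-k) \geq 0$, which is exactly the content of the hypothesis $k = \lfloor s \rfloor$.
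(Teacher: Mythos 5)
Your proposal is correct and follows essentially the same route as the paper: a direct basis computation for multiplicativity (with the same reindexing identity $\prod_{j\in\sigma(S)}|a_j|=\prod_{j\in S}|a_{\sigma(j)}|$) and the observation that the norm of a generalised permutation matrix is its largest entry. The only difference is that you spell out, via the two exchange steps, the maximization that the paper simply asserts by identifying the $|a_j|$ with the singular values $\alpha_j(A)$ and quoting the definition of $\varphi^s$; your added detail is sound and harmless.
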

\begin{proof}
Let us first show that $\|\mathfrak{p}_s(A)\|=\varphi^s(A)$ for $A \in P_d(\mathbb{R})$. Let $Ae_i=a_ie_{\pi(i)}$ for each $i=1,\ldots,d$. We note that $\|\mathfrak{p}_s(A)\|$ is the largest of the absolute values of the entries of the generalised permutation matrix $\mathfrak{p}_s(A)$, and hence
\begin{align*}\|\mathfrak{p}_s(A)\| &= \max_{(S,\ell) \in \mathfrak{S}} \left(\prod_{j \in S} \left|a_j\right|\right)\left|a_\ell \right|^{s-k} \\
&= \max_{(S,\ell)\in \mathfrak{S}}  \left(\prod_{j\in S}\alpha_j(A)\right)\alpha_\ell(A)^{s-k}\\
& = \alpha_1(A)\cdots \alpha_k(A)\alpha_{k+1}(A)^{s-k}=\varphi^s(A)\end{align*}
as claimed. 

Now suppose that $A,B \in P_d(\mathbb{R})$ such that $Ae_i=a_ie_{\pi_A(i)}$ and $Be_i=b_ie_{\pi_B(i)}$ for each $i=1,\ldots,d$. Clearly $ABe_i=a_{\pi_B(i)}b_i e_{(\pi_A \circ\pi_B)(i)}$ for every $i=1,\ldots,d$. For every $(S,\ell) \in \mathfrak{S}$ we have
\[\mathfrak{p}_s(B)e_{S,\ell} =  \left(\prod_{j \in S} \left|b_j\right|\right)\left|b_\ell \right|^{s-k} e_{\pi_B(S),\pi_B(\ell)}\]
and therefore
\begin{align*}\mathfrak{p}_s(A)\mathfrak{p}_s(B)e_{S,\ell} &=\left(\prod_{j \in \pi_B(S)} \left|a_j\right|\right)\left(\prod_{j \in S} \left|b_j\right|\right)\left|a_{\pi_B(\ell)} \right|^{s-k}|b_\ell|^{s-k} e_{(\pi_A \circ \pi_B)(S),(\pi_A \circ \pi_B)(\ell)}\\
&=\left(\prod_{j \in S}\left|a_{\pi_B(j)} b_j\right|\right)\left|a_{\pi_B(\ell)} \right|^{s-k}|b_\ell|^{s-k} e_{(\pi_A \circ \pi_B)(S),(\pi_A \circ \pi_B)(\ell)}\\
& = \mathfrak{p}_s(AB)e_{S,\ell}.
\end{align*}
Since $(S,\ell) \in \mathfrak{S}$ was arbitrary we see that $\mathfrak{p}_s(AB)=\mathfrak{p}_s(A)\mathfrak{p}_s(B)$ as claimed.
\end{proof}
Theorem \ref{th:main} follows immediately from the two results above : given $A_1,\ldots,A_N\in P_d(\mathbb{R})$ and $s \in (0,d)$ we have
\begin{align*}e^{P(\mathsf{A},s)}&=\lim_{n \to \infty} \left(\sum_{i_1,\ldots,i_n=1}^N \varphi^s(A_{i_n}\cdots A_{i_1})\right)^{\frac{1}{n}}\\
& = \lim_{n \to \infty} \left(\sum_{i_1,\ldots,i_n=1}^N \left\|\mathfrak{p}_s(A_{i_n}\cdots A_{i_1})\right\|\right)^{\frac{1}{n}}\\
& = \lim_{n \to \infty} \left(\sum_{i_1,\ldots,i_n=1}^N \left\|\mathfrak{p}_s(A_{i_n})\cdots \mathfrak{p}_s(A_{i_1})\right\|\right)^{\frac{1}{n}} \end{align*}
by Proposition \ref{pr:two}, and since the matrices $\mathfrak{p}_s(A_i)$ have only non-negative entries,
\[ \lim_{n \to \infty} \left(\sum_{i_1,\ldots,i_n=1}^N \left\|\mathfrak{p}_s(A_{i_n})\cdots \mathfrak{p}_s(A_{i_1})\right\|\right)^{\frac{1}{n}} =\rho\left(\sum_{i=1}^N \mathfrak{p}_s(A_i)\right)\]
by Proposition \ref{pr:one}. The proof is complete.

Let us now derive Corollary \ref{co:revco}. Let $A_1,\ldots,A_N$ be as in the statement of that result. For each $s \geq 2$ we have
\begin{align*}P(\mathsf{A},s)&=\lim_{n \to \infty} \frac{1}{n}\log \sum_{i_1,\ldots,i_n=1}^N \varphi^s(A_{i_n}\cdots A_{i_1})\\
&=\lim_{n \to \infty} \frac{1}{n}\log \sum_{i_1,\ldots,i_n=1}^N \left| \det \left(A_{i_n}\cdots A_{i_1}\right)\right|^{\frac{s}{2}} =\log \sum_{i=1}^N |\det A_i|^{\frac{s}{2}}.\end{align*}
Since each $A_i$ is a contraction this quantity tends to $-\infty$ as $s \to \infty$, and since $P(\mathsf{A},s)$ is continuous, strictly decreasing as a function of $s$, and satisfies $P(\mathsf{A},0)=\log N \geq 0$ it follows that it has a unique zero, which is the affinity dimension. If the affinity dimension is given by $s \geq 2$ then clearly it solves $\sum_{i=1}^N |\det A_i|^{s/2}=1$ and we are in the third of the three cases mentioned in the statement of the corollary.

For every $s \in (0,1)$ the construction of Theorem \ref{th:main} leads to the basis $e_{\emptyset,1}$, $e_{\emptyset,2}$ for $\mathbb{R}^2$, with respect to which the matrices $\hat{A}_i$ take the form
\[\hat{A}_i =\left\{\begin{array}{cl} \begin{pmatrix}|a_i|^s&0\\0&|d_i|^s\end{pmatrix} &\text{when }1 \leq i \leq k\\
\begin{pmatrix}0&|b_i|^s\\|c_i|^s&0\end{pmatrix} &\text{when }k+1 \leq i \leq N\end{array}\right.
\]
and therefore we have
\begin{equation}\label{eq:eq1}e^{P(\mathsf{A},s)} = \rho\left(\begin{pmatrix}\sum_{i=1}^k |a_i|^s & \sum_{i=k+1}^N |b_i|^s \\ \sum_{i=k+1}^N |c_i|^s & \sum_{i=1}^k |d_i|^s\end{pmatrix}\right) \end{equation}
for all $s$ in this range. It follows that if the affinity dimension is in the range $0<s<1$ then it is the unique value for which the above spectral radius equals $1$; and if the affinity dimension is not in this range, then the above spectral radius must exceed $1$ for all $s \in (0,1)$.

For $s \in [1,2)$, the construction of Theorem \ref{th:main} leads to the basis $e_{\{1\},2}$, $e_{\{2\},1}$ for $\mathbb{R}^2$, with respect to which the matrices $\hat{A}_i$ take the form
\[\hat{A}_i =\left\{\begin{array}{cl} \begin{pmatrix}|a_i|.|d_i|^{s-1}&0\\0&|a_i|^{s-1}|d_i|\end{pmatrix} &\text{when }1 \leq i \leq k\\
\begin{pmatrix}0&|b_i|.|c_i|^{s-1}\\|b_i|^{s-1}|c_i|&0\end{pmatrix} &\text{when }k+1 \leq i \leq N\end{array}\right.
\]
We deduce that 
\begin{equation}\label{eq:eq2}e^{P(\mathsf{A},s)} =\rho\left(\begin{pmatrix}\sum_{i=1}^k |a_i|.|d_i|^{s-1} & \sum_{i=k+1}^N |b_i|.|c_i|^{s-1} \\ \sum_{i=k+1}^N |b_i|^{s-1}|c_i| & \sum_{i=1}^k |a_i|^{s-1}|d_i|\end{pmatrix}\right)\end{equation}
for all $s$ in this range. We observe that the expressions \eqref{eq:eq1} and \eqref{eq:eq2} coincide for $s=1$, and that \eqref{eq:eq2} evaluates to $\sum_{i=1}^k |a_id_i|+\sum_{i=k+1}^N |b_ic_i|=\sum_{i=1}^N |\det A_i|$ when $s=2$. It follows that if the affinity dimension is in $[1,2]$ then it is the unique value in that range for which the spectral radius \eqref{eq:eq2} equals $1$. This completes the proof of the corollary.

\section{A modified pressure function}

The methods of this article can easily be adapted to consider certain modifications of the pressure functional discussed in \S1. For example, let $(T_1,\ldots,T_N)$ be a box-like affine IFS acting on $\mathbb{R}^2$ and suppose that $(T_1,\ldots,T_N)$ satisfies the following \emph{Rectangular Open Set Condition}: there exists a nonempty open rectangle $R=(a,b)\times (c,d) \subset \mathbb{R}^2$ such that the sets $T_iR$ are disjoint open subsets of $R$. Suppose also that at least one of the transformations $T_i$ maps horizontal lines to vertical lines and vice versa. Then the attractor $X=\bigcup_{i=1}^N T_iX$ includes a (possibly reflected) image of itself rotated through $90^\circ$, which implies that the projection of $X$ onto each of the two co-ordinate axes has the same box dimension $t \in [0,1]$, say. By a theorem of J.M. Fraser \cite[Corollary 2.3]{Fr12}, in this case the packing dimension and box dimension of $X$ are equal to the unique real number $s \in [t,2t]$ such that
\[\lim_{n \to \infty} \left(\sum_{i_1,\ldots,i_n=1}^N \alpha_1(A_{i_n}\cdots A_{i_1})^t\alpha_2(A_{i_n}\ldots A_{i_1})^{s-t}\right)^{\frac{1}{n}}=1,\]
where of course each $A_i$ is the linear part of the associated affine transformation $T_i$. We note that if $t=1$ then $s$ is simply the affinity dimension. The above limit can also be computed using the methods of this article:
\begin{proposition}
Let $A_1,\ldots,A_N \in M_2(\mathbb{R})$, $0<t \leq 1$ and $t \leq s \leq 2t$. Suppose that for some integer $k \in \{0,\ldots,N\}$ we have
\[A_i =\left\{\begin{array}{cl} \begin{pmatrix}a_i&0\\0&d_i\end{pmatrix} &\text{when }1 \leq i \leq k\\
\begin{pmatrix}0&b_i\\c_i&0\end{pmatrix} &\text{when }k+1 \leq i \leq N\end{array}\right.
\]
where each $a_i,b_i,c_i,d_i$ is nonzero. Then the limit
\[\lim_{n \to \infty} \left(\sum_{i_1,\ldots,i_n=1}^N \alpha_1(A_{i_n}\cdots A_{i_1})^t\alpha_2(A_{i_n}\ldots A_{i_1})^{s-t}\right)^{\frac{1}{n}}\]
is equal to the spectral radius
\[\rho\left(\begin{pmatrix}\sum_{i=1}^k |a_i|^t|d_i|^{s-t} & \sum_{i=k+1}^N |b_i|^t|c_i|^{s-t} \\ \sum_{i=k+1}^N |b_i|^{s-t}|c_i|^t & \sum_{i=1}^k |a_i|^{s-t}|d_i|^t\end{pmatrix}\right).\]
\end{proposition}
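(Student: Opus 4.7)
The plan is to mimic the proof of Theorem \ref{th:main}: construct a group homomorphism from $P_2(\mathbb{R})$ into the $2 \times 2$ non-negative generalised permutation matrices whose operator norm recovers $\alpha_1(A)^t\alpha_2(A)^{s-t}$, then invoke Proposition \ref{pr:one} to express the resulting limit as a spectral radius.

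Concretely, for $A \in P_2(\mathbb{R})$ with $Ae_j = a_je_{\pi(j)}$ I would define a matrix $\widehat{A}$ acting on the two-element basis $\{e_{\{1\},2}, e_{\{2\},1}\}$ of $\mathbb{R}^2$ by
\[\widehat{A}e_{\{i\},\ell} = |a_i|^t|a_\ell|^{s-t}e_{\{\pi(i)\},\pi(\ell)}.\]
The identity $\widehat{AB}=\widehat{A}\widehat{B}$ follows from the same expansion used in Proposition \ref{pr:two}, since the splitting $|a_{\pi_B(j)}b_j|^{r}=|a_{\pi_B(j)}|^{r}|b_j|^{r}$ is valid for any exponent $r\geq 0$. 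The norm identity $\|\widehat{A}\|=\alpha_1(A)^t\alpha_2(A)^{s-t}$ reduces to computing the largest entry of the generalised permutation matrix $\widehat{A}$, namely the larger of $|a_1|^t|a_2|^{s-t}$ and $|a_2|^t|a_1|^{s-t}$; the hypothesis $s\leq 2t$ gives $t\geq s-t\geq 0$, so this larger value is obtained by pairing the larger exponent with the larger of $|a_1|,|a_2|$, yielding exactly $\alpha_1(A)^t\alpha_2(A)^{s-t}$.

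With these two facts in hand, multiplicativity gives
\[\sum_{i_1,\ldots,i_n=1}^N \alpha_1(A_{i_n}\cdots A_{i_1})^t\alpha_2(A_{i_n}\cdots A_{i_1})^{s-t} = \sum_{i_1,\ldots,i_n=1}^N \|\widehat{A}_{i_n}\cdots\widehat{A}_{i_1}\|,\]
and Proposition \ref{pr:one} applied to the non-negative matrices $\widehat{A}_i$ identifies the $n$th-root limit with $\rho(\sum_{i=1}^N \widehat{A}_i)$. Writing out the $\widehat{A}_i$ in the ordered basis $(e_{\{1\},2},e_{\{2\},1})$ shows that the diagonal $A_i$ (with $\pi_i$ trivial) contribute to the diagonal of the sum, while the anti-diagonal $A_i$ (with $\pi_i$ the transposition) contribute to its off-diagonal, reproducing the explicit matrix in the statement. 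The only genuinely new step compared with Corollary \ref{co:revco} is the verification that $\|\widehat{A}\|=\alpha_1(A)^t\alpha_2(A)^{s-t}$, and I expect this---specifically the appeal to $s\leq 2t$ to align the larger exponent with the larger singular value---to be the only place requiring care.
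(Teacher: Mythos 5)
Your proposal is correct and follows essentially the same route as the paper: the map $\widehat{A}$ you define on the basis $\{e_{\{1\},2},e_{\{2\},1}\}$ is exactly the homomorphism $\mathfrak{q}_t$ the paper writes out explicitly, and the paper likewise verifies multiplicativity, checks $\|\mathfrak{q}_t(A)\|=\alpha_1(A)^t\alpha_2(A)^{s-t}$ (where your alignment of the larger exponent $t$ with $\alpha_1(A)$ uses both $t\le s$ and $s\le 2t$, as it should), and concludes via Proposition \ref{pr:one}.
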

\begin{proof}
Let us define $\mathfrak{q}_t \colon P_2(\mathbb{R}) \to P_2(\mathbb{R})$ by
\[\mathfrak{q}_t\left(\begin{pmatrix}a&0\\0&d\end{pmatrix}\right):=\begin{pmatrix}|a|^t|d|^{s-t}&0 \\ 0&|a|^{s-t}|d|^t\end{pmatrix}\]
and
\[\mathfrak{q}_t\left(\begin{pmatrix}0&b\\c&0\end{pmatrix}\right):=\begin{pmatrix}0&|b|^t|c|^{s-t}\\ |b|^{s-t}|c|^t&0\end{pmatrix}.\]
It is easily checked that $\mathfrak{q}_t(AB)=\mathfrak{q}_t(A)\mathfrak{q}_t(B)$ and $\|\mathfrak{q}_t(A)\|=\alpha_1(A)^t \alpha_2(A)^{s-t}$ for every $A,B \in P_2(\mathbb{R})$, and therefore
\begin{eqnarray*}
{\lefteqn{\lim_{n \to \infty} \left(\sum_{i_1,\ldots,i_n=1}^N \alpha_1(A_{i_n}\cdots A_{i_1})^t\alpha_2(A_{i_n}\ldots A_{i_1})^{s-t}\right)^{\frac{1}{n}}}}& &\\ 
& & =\lim_{n \to \infty} \left(\sum_{i_1,\ldots,i_n=1}^N \|\mathfrak{q}_t(A_{i_n})\cdots \mathfrak{q}_t(A_{i_1}) \|\right)^{\frac{1}{n}}=\rho\left(\sum_{i=1}^N \mathfrak{q}_t(A_i)\right)\end{eqnarray*}
similarly to the previous section.
\end{proof}
Our methods could also be extended to the computation of further variant pressure functionals considered in \cite{Fr16}, but we do not pursue this.

\section{Examples}
{\bf{Example 1.}} Define two affine transformations $T_1,T_2 \colon \mathbb{R}^2 \to \mathbb{R}^2$ by
\begin{align*}
T_1\begin{pmatrix}x\\y\end{pmatrix}&:=\begin{pmatrix}-\frac{13}{27}&0\\0&\frac{7}{9}\end{pmatrix}\begin{pmatrix}x\\y\end{pmatrix}+\begin{pmatrix}\frac{13}{27}\\\frac{2}{9}\end{pmatrix} \\
T_2\begin{pmatrix}x\\y\end{pmatrix}&:=\begin{pmatrix}0&\frac{13}{27}\\\frac{7}{9}&0\end{pmatrix}\begin{pmatrix}x\\y\end{pmatrix}+\begin{pmatrix}\frac{14}{27}\\0\end{pmatrix} 
\end{align*}
and denote the associated $2\times 2$ matrices by $A_1$ and $A_2$ respectively. For $0 <s \leq 1$ the matrix
\[\begin{pmatrix}\frac{13^s}{27^s}& \frac{13^s}{27^s}\\\frac{7^s}{9^2}& \frac{7^s}{9^s}\end{pmatrix}\]
\begin{figure}
    \centering
    \includegraphics[width=0.75\linewidth]{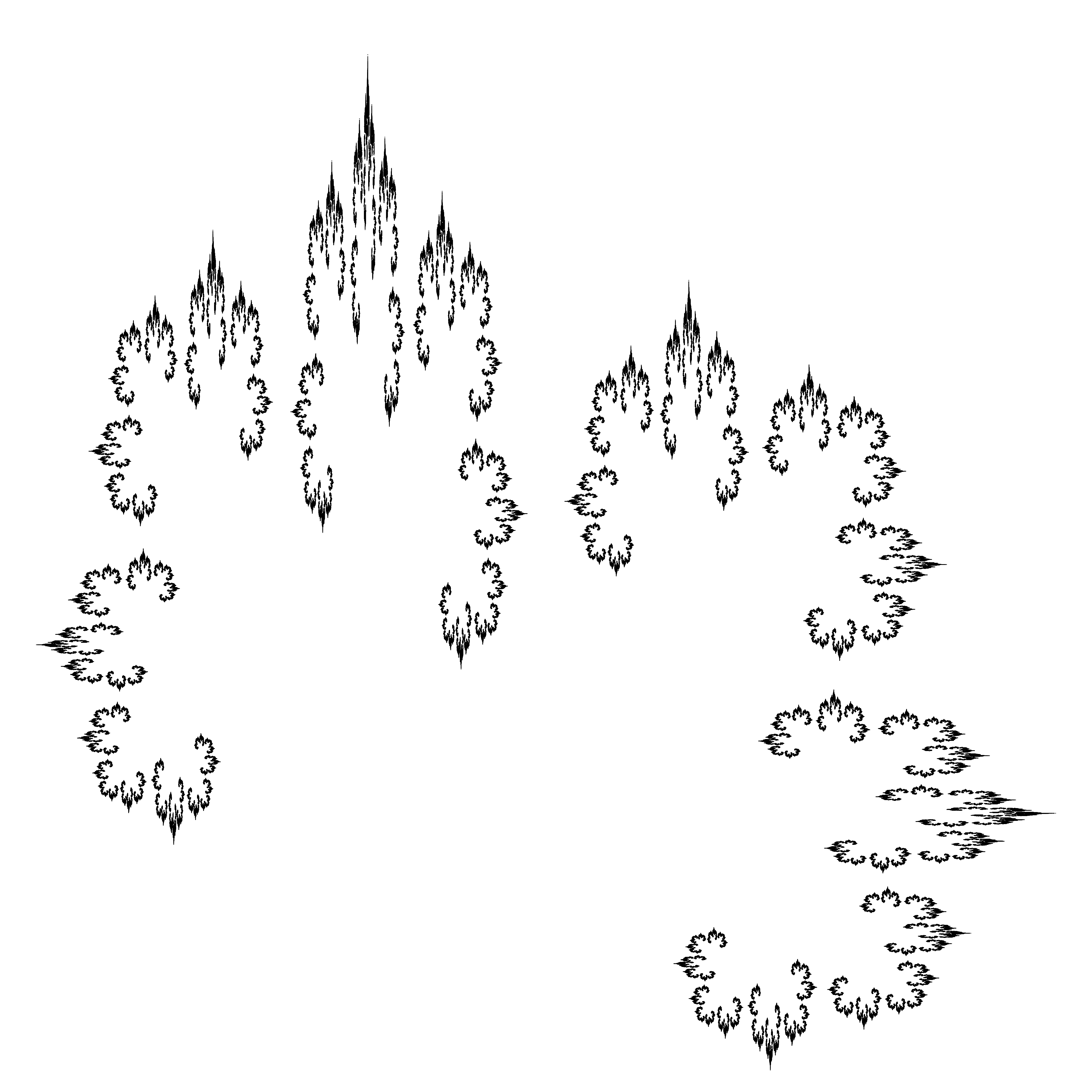}
    \caption{The attractor of the iterated function system defined in Example 1, for which the Hausdorff dimension, packing dimension and box dimension are all equal to the affinity dimension of the defining iterated function system.}
    \end{figure}
clearly has determinant zero, and therefore has spectral radius equal to its trace $\frac{13^s}{27^s}+ \frac{7^s}{9^s}\geq \frac{13}{27}+\frac{7}{9}>1$. We deduce that the affinity dimension of $(T_1,T_2)$ must exceed one. On the other hand clearly $|\det A_1|+|\det A_2|=\frac{218}{243}<1$ and so the affinity dimension must be less than $2$. It follows that the affinity dimension is the unique value of $s$ such that 
\[\begin{pmatrix}\frac{13}{27}\left(\frac{7}{9}\right)^{s-1}&\frac{13}{27}\left(\frac{7}{9}\right)^{s-1}\\ \frac{7}{9}\left(\frac{13}{27}\right)^{s-1}&\frac{7}{9}\left(\frac{13}{27}\right)^{s-1} \end{pmatrix}\]
has spectral radius $1$. Since this matrix also has determinant zero we conclude that the affinity dimension solves $\frac{13}{27}\left(\frac{7}{9}\right)^{s-1}+\frac{7}{9}\left(\frac{13}{27}\right)^{s-1}=1$ and is therefore equal to
\[s\approx 1.43035 20226 23969 408121 44729 61299 96697 74324 72301 14759 \ldots\]
Let us show using \cite[Proposition 7.2]{MoSh16} that the Hausdorff dimension of the attractor $X$ of $(T_1,T_2)$ has Hausdorff dimension equal to the affinity dimension of $(T_1,T_2)$. According to that proposition this holds true if we can show that:
\begin{enumerate}[(i)]
\item
The entries of the matrices and vectors defining $T_1$ and $T_2$ are algebraic;
\item
The IFS $(T_1,T_2)$ satisfies the \emph{Strong Open Set Condition}: there exists a nonempty open set $U \subset \mathbb{R}^2$ such that $T_1U, T_2U \subseteq U$ and $T_1U \cap T_2U=\emptyset$, and such that additionally $T_{i_n}\cdots T_{i_1}\overline{U}\subseteq U$ for some $i_1,\ldots,i_n \in \{1,2\}$;
\item
Let $P_x$, $P_y$ denote projection of $\mathbb{R}^2$ onto the first and second co-ordinates respectively; then for every $n\geq 1$, if $(i_1,\ldots,i_n), (j_1,\ldots,j_n) \in \{1,2\}^n$ are distinct then $P_xT_{i_n}\cdots T_{i_1}(0) \neq P_xT_{j_n}\cdots T_{j_1}(0)$ and $P_yT_{i_n}\cdots T_{i_1}(0) \neq P_yT_{j_n}\cdots T_{j_1}(0)$.
\end{enumerate}
Clearly (i) holds. To see that (ii) is satisfied we note that $T_1(0,1)^2=(0,\frac{13}{27})\times (\frac{2}{9},1)$ and $T_2(0,1)^2=(\frac{14}{27},1)\times (0,\frac{7}{9})$ are disjoint subsets of $(0,1)^2$, and that $T_2^2[0,1]^2=[\frac{14}{27},\frac{217}{243}]\times[\frac{98}{243},\frac{7}{9}]\subset(0,1)^2$. To prove (iii), suppose for a contradiction that $(i_1,\ldots,i_n),(j_1,\ldots,j_n) \in \{1,2\}^n$ are the shortest pair of distinct sequences such that either $P_xT_{i_n}\cdots T_{i_1}(0) = P_xT_{j_n}\cdots T_{j_1}(0)$ or $P_yT_{i_n}\cdots T_{i_1}(0) = P_yT_{j_n}\cdots T_{j_1}(0)$. By minimality of $n$ we necessarily have $i_n \neq j_n$, so without loss of generality suppose $i_n=1$ and $j_n=2$. Let $(x_1,y_1)^T:=T_{i_{n-1}}\cdots T_{i_1}(0)$ and $(x_2,y_2)^T:=T_{j_{n-1}}\cdots T_{j_1}(0)$. These vectors belong to $[0,1]^2$ and their entries are obtained from $0$ by repeatedly adding one of $\frac{13}{27},\frac{14}{27}$ or $\frac{2}{9}$ and by repeatedly multiplying by $\pm\frac{13}{27}$ or $\frac{7}{9}$, in some order. In particular each entry is of the form $p/q$ where $p \in \mathbb{Z}$ and $q$ is a power of three; which is to say, each entry belongs to the ring $\mathbb{Z}[\frac{1}{3}]$. By hypothesis we have either $P_x T_1(x_1,y_1)^T=P_x T_2(x_2,y_2)^T$ or $P_y T_1(x_1,y_1)^T=P_y T_2(x_2,y_2)^T$. If the former, we obtain $-\frac{13}{27}x_1 + \frac{13}{27} = \frac{13}{27}y_2+\frac{14}{27}$ which yields $x_1+y_2=-\frac{1}{13}<0$, an impossibility. If the latter we obtain $\frac{7}{9}y_1+\frac{2}{9}=\frac{7}{9}x_2$, whence $\frac{2}{7}=x_2-y_1 \in \mathbb{Z}[\frac{1}{3}]$ which contradicts the fundamental theorem of arithmetic. We conclude that criteria (i)--(iii) above are satisfied by $(T_1,T_2)$ and therefore the Hausdorff dimension (and hence also the packing and box dimensions) of $X$ are equal to the affinity dimension of $(T_1,T_2)$ as claimed.

{\bf{Example 2.}} Define four affine transformations $T_1,T_2,T_3,T_4 \colon \mathbb{R}^2 \to \mathbb{R}^2$ by
\begin{align*}
T_1\begin{pmatrix}x\\y\end{pmatrix}&:=\begin{pmatrix}\frac{1}{3}&0\\0&\frac{2}{3}\end{pmatrix}\begin{pmatrix}x\\y\end{pmatrix}+\begin{pmatrix}\frac{2}{3}\\0\end{pmatrix} \\
T_2\begin{pmatrix}x\\y\end{pmatrix}&:=\begin{pmatrix}-\frac{2}{3}&0\\0&-\frac{1}{3}\end{pmatrix}\begin{pmatrix}x\\y\end{pmatrix}+\begin{pmatrix}\frac{2}{3}\\1\end{pmatrix}\\
T_3\begin{pmatrix}x\\y\end{pmatrix}&:=\begin{pmatrix}0&\frac{2}{9}\\-\frac{1}{3}&0\end{pmatrix}\begin{pmatrix}x\\y\end{pmatrix}+\begin{pmatrix}\frac{2}{3}\\1\end{pmatrix} \\
T_4\begin{pmatrix}x\\y\end{pmatrix}&:=\begin{pmatrix}0&\frac{4}{9}\\-\frac{1}{3}&0\end{pmatrix}\begin{pmatrix}x\\y\end{pmatrix}+\begin{pmatrix}\frac{2}{9}\\\frac{2}{3}\end{pmatrix} 
\end{align*}\begin{figure}
    \centering
    \includegraphics[width=0.75\linewidth]{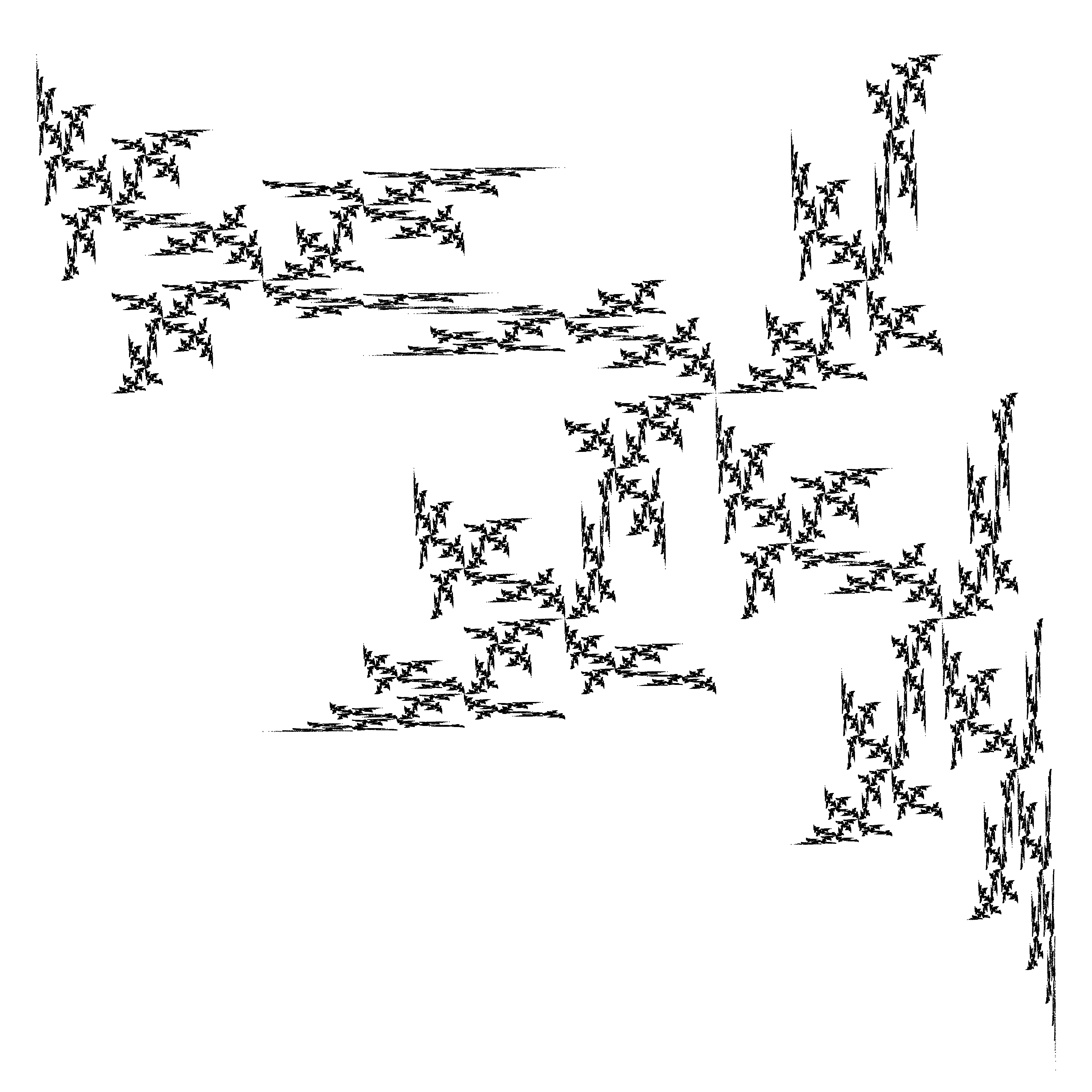}
    \caption{The attractor of the iterated function system defined in Example 2, which has packing dimension and box dimension equal to the affinity dimension of its defining iterated function system.}
    \end{figure} and denote the associated $2\times 2$ matrices by $A_1$, $A_2$, $A_3$ and $A_4$ respectively. It is easy to see that $(T_1,T_2)$ has affinity dimension $1$ since the matrix produced by applying Corollary \ref{co:revco} to $(A_1,A_2)$ alone is equal to the identity matrix when $s=1$. It follows that the affinity dimension of $(T_1,T_2,T_3,T_4)$ must exceed $1$. Since on the other hand $\sum_{i=1}^4 |\det A_i|=\frac{2}{3}<1$ the affinity dimension of $(T_1,T_2,T_3,T_4)$ must be less than $2$, and hence it is the unique value of $s \in [1,2]$ such that the matrix
\[\frac{1}{3^s}\begin{pmatrix} 2^{s-1} + 2 & 2 \\\left(\frac{2}{3}\right)^{s-1} +\left(\frac{4}{3}\right)^{s-1}&  2^{s-1}+2\end{pmatrix}\]
has spectral radius $1$. A real $2 \times 2$ matrix $B$ has $1$ as an eigenvalue if and only if $1+\det B=\mathrm{tr}\,B$ by elementary consideration of the characteristic polynomial, and hence the affinity dimension $s \in[1,2]$ solves
\[ 1 + \frac{1}{9^s}\left(4^{s-1}+2^{s+1}+4- 2\left(\frac{2}{3}\right)^{s-1}-2\left(\frac{4}{3}\right)^{s-1} \right)=\frac{2^s+4}{3^s}.\]
We may thus easily compute the affinity dimension of $(T_1,T_2,T_3,T_4)$ to be
\[s \approx 1.54202 66478 62956 03651 88932 87043 45802 50254 31511 44645\ldots\]
For this example we ignore the Hausdorff dimension in favour of the simpler task of studying the packing dimension of the attractor. It is easily verified that $(T_1,T_2,T_3,T_4)$ maps the open unit square $(0,1)^2$ into four pairwise disjoint open subrectangles of $(0,1)^2$ which are separated by the lines $x=\frac{2}{3}$ and $y=\frac{2}{3}$ and which meet at corners at the point $(\frac{2}{3},\frac{2}{3})$. In particular $(T_1,T_2,T_3,T_4)$ satisfies the Rectangular Open Set Condition of Feng-Wang and Fraser \cite{FeWa05,Fr12}. By \cite[Lemma 2.8]{Fr12} the projection of the attractor $X$ onto either co-ordinate axis has box dimension 1, and it follows by \cite[Corollary 2.6]{Fr12} that the packing dimension and box dimension of $X$ are equal to the affinity dimension of $(T_1,T_2,T_3,T_4)$.

\section{Acknowledgements}
This research was supported by the Leverhulme Trust (Research Project Grant number RPG-2016-194). The author thanks J.M. Fraser, A. K\"aenm\"aki and P. Shmerkin for helpful remarks.
\bibliographystyle{acm}
\bibliography{ox-like}

\begin{thebibliography}{10}

\bibitem{Ba07}
{\sc Bara\'nski, K.}
\newblock Hausdorff dimension of the limit sets of some planar geometric
  constructions.
\newblock {\em Adv. Math. 210}, 1 (2007), 215--245.

\bibitem{BaKaKo17}
{\sc B\'ar\'any, B., K\"aenm\"aki, A., and Koivusalo, H.}
\newblock Dimension of self-affine sets for fixed translation vectors.
\newblock arXiv:1611.09196, 2016.

\bibitem{BaRa17}
{\sc B\'ar\'any, B., and Rams, M.}
\newblock Dimension maximizing measures for self-affine systems.
\newblock {\em Trans. Amer. Math. Soc.\/} (2017).
\newblock To appear.

\bibitem{Be84}
{\sc Bedford, T.}
\newblock {\em Crinkly curves, {M}arkov partitions and box dimensions in
  self-similar sets}.
\newblock 1984.
\newblock Thesis (Ph.D.)--The University of Warwick.

\bibitem{DaSi16}
{\sc Das, T., and Simmons, D.}
\newblock The {H}ausdorff and dynamical dimensions of self-affine sponges: a
  dimension gap result.
\newblock Preprint, available at arXiv:1604.08166, 2016.

\bibitem{FaKe17}
{\sc Falconer, K., and Kempton, T.}
\newblock Planar self-affine sets with equal {H}ausdorff, box and affinity
  dimensions.
\newblock {\em Ergodic Theory Dynam. Systems\/} (2017).
\newblock To appear.

\bibitem{FaMi07}
{\sc Falconer, K., and Miao, J.}
\newblock Dimensions of self-affine fractals and multifractals generated by
  upper-triangular matrices.
\newblock {\em Fractals 15}, 3 (2007), 289--299.

\bibitem{FaSl09}
{\sc Falconer, K., and Sloan, A.}
\newblock Continuity of subadditive pressure for self-affine sets.
\newblock {\em Real Anal. Exchange 34}, 2 (2009), 413--427.

\bibitem{Fa88}
{\sc Falconer, K.~J.}
\newblock The {H}ausdorff dimension of self-affine fractals.
\newblock {\em Math. Proc. Cambridge Philos. Soc. 103}, 2 (1988), 339--350.

\bibitem{FeSh14}
{\sc Feng, D.-J., and Shmerkin, P.}
\newblock Non-conformal repellers and the continuity of pressure for matrix
  cocycles.
\newblock {\em Geom. Funct. Anal. 24}, 4 (2014), 1101--1128.

\bibitem{FeWa05}
{\sc Feng, D.-J., and Wang, Y.}
\newblock A class of self-affine sets and self-affine measures.
\newblock {\em J. Fourier Anal. Appl. 11}, 1 (2005), 107--124.

\bibitem{Fr12}
{\sc Fraser, J.~M.}
\newblock On the packing dimension of box-like self-affine sets in the plane.
\newblock {\em Nonlinearity 25}, 7 (2012), 2075--2092.

\bibitem{Fr15}
{\sc Fraser, J.~M.}
\newblock Remarks on the analyticity of subadditive pressure for products of
  triangular matrices.
\newblock {\em Monatsh. Math. 177}, 1 (2015), 53--65.

\bibitem{Fr16}
{\sc Fraser, J.~M.}
\newblock On the {$L^q$}-spectrum of planar self-affine measures.
\newblock {\em Trans. Amer. Math. Soc. 368}, 8 (2016), 5579--5620.

\bibitem{Hu81}
{\sc Hutchinson, J.~E.}
\newblock Fractals and self-similarity.
\newblock {\em Indiana Univ. Math. J. 30}, 5 (1981), 713--747.

\bibitem{JoPoSi07}
{\sc Jordan, T., Pollicott, M., and Simon, K.}
\newblock Hausdorff dimension for randomly perturbed self affine attractors.
\newblock {\em Comm. Math. Phys. 270}, 2 (2007), 519--544.

\bibitem{KaMo16}
{\sc K\"aenm\"aki, A., and Morris, I.~D.}
\newblock Structure of equilibrium states on self-affine sets and strict
  monotonicity of affinity dimension.
\newblock arXiv:1609.07360, 2016.

\bibitem{Mc84}
{\sc McMullen, C.}
\newblock The {H}ausdorff dimension of general {S}ierpi\'nski carpets.
\newblock {\em Nagoya Math. J. 96\/} (1984), 1--9.

\bibitem{Mo16}
{\sc Morris, I.~D.}
\newblock An inequality for the matrix pressure function and applications.
\newblock {\em Adv. Math. 302\/} (2016), 280--308.

\bibitem{MoSh16}
{\sc Morris, I.~D., and Shmerkin, P.}
\newblock On equality of {H}ausdorff and affinity dimensions, via self-affine
  measures on positive subsystems.
\newblock arXiv:1602.08789v3, 2016.

\bibitem{PoVy15}
{\sc Pollicott, M., and Vytnova, P.}
\newblock Estimating singularity dimension.
\newblock {\em Math. Proc. Cambridge Philos. Soc. 158}, 2 (2015), 223--238.

\bibitem{Pr97}
{\sc Protasov, V.~Y.}
\newblock The generalized joint spectral radius: a geometric approach.
\newblock {\em Izv. Ross. Akad. Nauk Ser. Mat. 61}, 5 (1997), 99--136.

\bibitem{Pr10}
{\sc Protasov, V.~Y.}
\newblock When do several linear operators share an invariant cone?
\newblock {\em Linear Algebra Appl. 433}, 4 (2010), 781--789.

\bibitem{Ra17}
{\sc Rapaport, A.}
\newblock On self-affine measures with equal {H}ausdorff and {L}yapunov
  dimensions.
\newblock {\em Trans. Amer. Math. Soc.\/} (2017).
\newblock To appear.

\bibitem{Zh98}
{\sc Zhou, D.-X.}
\newblock The {$p$}-norm joint spectral radius for even integers.
\newblock {\em Methods Appl. Anal. 5}, 1 (1998), 39--54.

\end{thebibliography}
\end{document}